\documentclass[reqno,12pt]{amsart}
\usepackage{amsmath,amsthm,enumerate,cite,graphics,pstricks,amsfonts,latexsym,amsopn,verbatim,amscd,amssymb}
\usepackage{color}
\usepackage{psfrag}
\usepackage[all]{xy}

\textwidth=425pt
\oddsidemargin=16pt
\evensidemargin=16pt

\theoremstyle{plain}
\newtheorem{thm}{Theorem}[section]

\newtheorem{lem}[thm]{Lemma}
\newtheorem{cor}[thm]{Corollary}
\newtheorem{prop}[thm]{Proposition}

\theoremstyle{definition}

\newtheorem{rem}[thm]{Remark}

\DeclareMathOperator{\Aut}{Aut}

\DeclareMathOperator{\Cent}{Cent}

\begin{document} 

\title[On capable groups  of order $p^2q$]{On capable groups  of order $p^2q$} 

\author[S. J. Baishya  ]{Sekhar Jyoti Baishya} 
\address{S. J. Baishya, Department of Mathematics, Pandit Deendayal Upadhyaya Adarsha Mahavidyalaya, Behali, Biswanath-784184, Assam, India.}

\email{sekharnehu@yahoo.com}

\begin{abstract}
A group is said to be capable if it is the central factor of some group. In this paper, among other results we have characterized capable groups of order $p^2q$, for any distinct primes $p, q$, which extends Theorem 1.2 of S. Rashid, N. H. Sarmin, A. Erfanian, and N. M. Mohd Ali, {\em On the non abelian tensor square and capability of groups of order $p^2q$}, Arch. Math., \textbf{97}  (2011), 299--306. We have also computed the number of distinct element centralizers of a group (finite or infinite) with central factor of order $p^3$, which extends Proposition 2.10 of S. M. Jafarian Amiri, H. Madadi and H. Rostami,  {\em On $F$-groups with the central factor of order $p^4$},  Math. Slovaca, \textbf{67} (5) (2017), 1147--1154.  
\end{abstract}

\subjclass[2010]{20D60, 20D99}
\keywords{Finite group, Capable group, Centralizers }
%\thanks{*This paper is a part of my Ph.D thesis.}
\maketitle

\section{Introduction} \label{S:intro}

Given a group $G$, let $\Cent(G)$ denote the set of centralizers of $G$, i.e., $\Cent(G)=\lbrace C(x) \mid x \in G\rbrace $, where $C(x)$ is the centralizer of the element $x$ in $G$. The study of finite groups in terms of $|\Cent(G)|$, becomes an interesting research topic in last few years. It is easy to see that $|\Cent(G)|=1$ if and only if $G$ is abelian. Belcastro and Sherman proved that there is no group with $|\Cent(G)|=2$ or $3$ and   that for a finite group $|\Cent(G)|=4$ if and only if $\frac{G}{Z(G)} \cong C_2 \times C_2$ and  $|\Cent(G)|=5$ if and only if $\frac{G}{Z(G)} \cong S_3$ or $C_3 \times C_3$ (\cite{ctc092}). After that the structure of all finite groups are determined for $|\Cent(G)|=6, 7, 8, 9, 10$ in terms of $\frac{G}{Z(G)}$ (see \cite{ctc09, ed09, baishya1, amiri2019, amiri20191}). There are several other interesting results concerning the impact of $|\Cent(G)|$. For example, every finite group with $|\Cent(G)|\leq 21$ is solvable (\cite{zarrin0942}). Again, if $G$ is a solvable group (not necessarily finite) of derived length $d$, then $d \leq |\Cent(G)|$ (\cite{zarrin0941}). More recently, we have if $G$ is a non-trivial group such that $|\Cent(G)| \geq \frac{2\mid G \mid}{3}$, then $G \cong S_3, S_3 \times S_3$ or $D_{10}$ (\cite {rostami}) etc. More information on this and related concepts may be found in  \cite{amiri17, en09, ctc091, ctc099, baishya, zarrin094}.

On the other hand a group $G$ is said to be capable if there exists a group $H$ such that $G \cong \frac{H}{Z(H)}$. The study of capable groups was initiated by R. Baer \cite{baer}, who determined all capable groups which are direct sums of cyclic groups. Consequently, all capable finite abelian groups are characterised by R. Baer's result. For finite $p$-groups capability is closely related to their classification. The authors in \cite{capable} characterised capable extraspecial groups (only $D_8$ and the extraspecial groups of order $p^3$ and exponent $p$ are capable); they also studied the metacyclic capable groups. 

While studying the number of distinct element centralizers in a group $G$ it is observed that 
$\mid \Cent(G) \mid$ has strong influence on the group and $\mid \Cent(G) \mid$ depends on the structure of $\frac{G}{Z(G)}$. It is also of independent interest to obtain some information about the group $G$ from a given $\frac{G}{Z(G)}$. In this paper, for any primes $p, q, r$ (not necessarily distinct) we have studied capable groups of order $pqr$ and computed $\mid \Cent(G) \mid$ for finite groups $G$ with  $\mid \frac{G}{Z(G)}\mid =pqr$.  

In this paper, all groups are finite (however  Lemma \ref{rem144}, Remark \ref{rem1} and Proposition \ref{lem195} holds for any group) and all notations are usual. For example $G'$, $Z(G)$ denotes the commutator subgroup  and the center of a group $G$ respectively, $C_n$ denotes the cyclic group of order $n$,  $D_{2n}$ denotes the dihedral group of order $2n$, $A_4$ denotes the alternating group of degree $4$,  and $ C_n {\rtimes}_\theta C_p$ denotes semidirect product of $C_n$ and $C_p$, where $\theta : C_p \longrightarrow \Aut(C_n)$ is a homomorphism.

 \section{The main results}

In this section, we prove the main results of the paper. However, we begin with the following lemma.

\begin{lem}\label{rem144}
 Let $G$ be any group. If $\mid \frac{G}{Z(G)} \mid=pqr$ where $p, q, r$ are primes (not necessarily distinct), then $C(x)$ is abelian for any $x \in G \setminus Z(G)$. 
\end{lem}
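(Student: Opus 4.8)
The plan is to fix an element $x \in G \setminus Z(G)$, set $H = C(x)$, and prove $H$ is abelian by showing $[H : Z(H)] = 1$. Two observations set this up. First, since $x \notin Z(G)$ we have $H = C(x) \neq G$, so $H/Z(G)$ is a \emph{proper} subgroup of $G/Z(G)$; as $\lvert G/Z(G)\rvert = pqr$, the index $[H : Z(G)]$ is a proper divisor of $pqr$, in particular finite. Second, $x$ lies in $Z(H)$ (it commutes with everything in its own centralizer, by definition), while $Z(G) \subseteq Z(H)$ as well, so $Z(H)$ \emph{strictly} contains $Z(G)$ — strictly, precisely because $x \in Z(H) \setminus Z(G)$.

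Granting these, I would run the following index count. From $Z(G) \leq Z(H) \leq H$ and finiteness of $[H : Z(G)]$, the index is multiplicative, $[H : Z(G)] = [H : Z(H)]\,[Z(H) : Z(G)]$, and the second factor is at least $2$. Hence $[H : Z(H)]$ is a proper divisor of $[H : Z(G)]$, which is itself a proper divisor of $pqr$. Since $pqr$ is a product of exactly three primes counted with multiplicity, a proper divisor of it involves at most two prime factors, and a proper divisor of such a number involves at most one; therefore $[H : Z(H)]$ equals $1$ or a prime. This step is deliberately phrased in terms of the number of prime factors rather than an explicit list of divisors, so that it covers uniformly all cases of whether $p, q, r$ are distinct.

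To conclude, I would appeal to the elementary fact that a group whose central quotient is cyclic is itself abelian: if $[H : Z(H)]$ were prime, then $H/Z(H)$ would be cyclic, forcing $H$ abelian and $[H : Z(H)] = 1$, a contradiction. Hence $[H : Z(H)] = 1$, i.e., $C(x) = H$ is abelian.

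I do not anticipate a genuine obstacle: the argument is short and the only points needing attention are the bookkeeping that all quotients involved are finite (immediate, since $H/Z(G)$ embeds in the finite group $G/Z(G)$, so the lemma indeed holds for arbitrary, possibly infinite, $G$) and the inclusions $x \in Z(C(x))$ and $Z(G) \subseteq Z(C(x))$, both of which are immediate from the definitions. If anything deserves to be singled out as the main point, it is the observation that replacing \textquotedblleft divisor of $pqr$\textquotedblright\ by \textquotedblleft number with at most so many prime factors\textquotedblright\ is what makes the divisor-chain argument go through cleanly in every case.
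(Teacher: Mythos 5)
Your proof is correct. It reaches the same conclusion as the paper's but organizes the argument differently: the paper runs a case analysis on the order of $C(x)/Z(G)$ as a proper divisor of $pqr$ (if that quotient is cyclic, done; if it has order a product of two primes, the paper exhibits an explicit generating set $\langle x, y, Z(G)\rangle$ of $C(x)$ consisting of pairwise commuting elements), whereas you fold everything into a single index computation along the chain $Z(G) \leq Z(C(x)) \leq C(x)$, using the observation that $x$ itself witnesses $Z(C(x)) \supsetneq Z(G)$ to force $[C(x):Z(C(x))]$ to be $1$ or a prime, and then invoking the central-quotient-cyclic criterion. The underlying facts are the same in both proofs (properness of $C(x)$ in $G$, centrality of $x$ in its own centralizer, and the cyclic-central-quotient lemma), but your packaging is more uniform: it avoids the case split entirely, handles coincidences among $p,q,r$ without comment, and makes visibly explicit why the statement holds for infinite $G$ (all the relevant indices are finite because $C(x)/Z(G)$ embeds in the finite group $G/Z(G)$). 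The paper's version, in exchange, is slightly more concrete in that it names generators of $C(x)$, which is occasionally useful downstream, but nothing in the rest of the paper depends on that extra information.
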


\begin{proof}
Let $x \in G \setminus Z(G)$. If $\frac{C(x)}{Z(G)}$ is cyclic, then $C(x)$ is abelian. Now, suppose $\mid \frac{C(x)}{Z(G)} \mid=pq$. Then $o(xZ(G))=p, q$ or $pq$. If $o(xZ(G))=pq$, then $C(x)$ is abelian. Next suppose $o(xZ(G))\neq pq$. Then there exists some $y \in C(x)$ such that 
$\frac{C(x)}{Z(G)}= \langle xZ(G), yZ(G) \rangle$. Consequently, $C(x)=\langle x, y, Z(G) \rangle$ and hence $C(x)$ is abelian. If $\mid \frac{C(x)}{Z(G)} \mid=pr$ or $qr$, then using similar arguments we can show that $C(x)$ is abelian.
\end{proof}

\begin{rem}\label{rem1}
Recall that a group $G$ is said to be a CA-group if $C(x)$ is abelian for any $x \in G \setminus Z(G)$. It is easy to see that for such groups $C(x) \cap C(y)=Z(G)$ for any two distinct proper centralizers $C(x)$ and $C(y)$. Also we have seen that if $\mid \frac{G}{Z(G)} \mid=pqr$ where $p, q, r$ are primes (not necessarily distinct), then $G$ is a CA-group. 
\end{rem}

We now consider the groups of order $pqr$ where $p<q<r$.

\begin{prop}\label{lem19}
If $G$ is a  non-abelian  group of order $pqr, p<q<r$ are primes, then $\mid \Cent(G) \mid= q+2, r+2$ or $qr+2$. 
\end{prop}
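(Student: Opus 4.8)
The plan is to use the fact from Remark \ref{rem1} that every group with $|G/Z(G)| = pqr$ is a CA-group, so that two distinct proper centralizers intersect in $Z(G)$; consequently the non-central elements of $G$ are partitioned by the sets $C_i \setminus Z(G)$ with $C_i$ ranging over the distinct proper centralizers, and $|\Cent(G)|$ is one more than the number of maximal abelian subgroups of $G$. The structural input is that a non-abelian group of order $pqr$ with $p < q < r$ has a normal (hence unique) Sylow $r$-subgroup $R \cong C_r$; I would recall the standard counting proof of this or simply cite it. The argument then revolves around the subgroup $A := C_G(R)$, and I would first dispose of the case $R \le Z(G)$ separately.

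If $R \le Z(G)$, then $Z(G) \supseteq R$ and, since $G$ is non-abelian and $G/Z(G)$ cannot be cyclic, the only possibility is $Z(G) = R$; hence $G/Z(G)$ is the non-abelian group of order $pq$, and Schur--Zassenhaus gives $G = R \times K$ with $K \cong G/Z(G)$. As $R$ is abelian, $C_G((x,k)) = R \times C_K(k)$, so $|\Cent(G)| = |\Cent(K)|$; a direct count in $K$ (whose maximal abelian subgroups are the unique normal Sylow $q$-subgroup together with its $q$ Sylow $p$-subgroups, and whose centre is trivial) yields $|\Cent(G)| = q + 2$.

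Now suppose $R \not\le Z(G)$. Then every non-identity element $a$ of $R$ is non-central, $C_G(a) = C_G(R) = A$, and by Lemma \ref{rem144} $A$ is abelian; since $G/A$ embeds in $\Aut(C_r) \cong C_{r-1}$ it is cyclic and non-trivial, so $|A| \in \{qr, pr, r\}$. Because $R$ is the unique Sylow $r$-subgroup, any maximal abelian subgroup containing an element of order $r$ lies in $C_G(R) = A$ and thus equals $A$; so $A$ is one proper centralizer, accounting for exactly $|A| - |Z(G)|$ non-central elements, and every other maximal abelian subgroup has order dividing $pq$. Writing $G = R \rtimes H$ with $|H| = pq$ (Schur--Zassenhaus), the group $H$ is either cyclic or the non-abelian group of order $pq$.

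The remaining step is to enumerate the other maximal abelian subgroups in each subcase. If $H \cong C_{pq}$, I would show that every element outside $A$ is $R$-conjugate to an element of $H$ and that $C_G(h) = H$ for every non-central $h \in H$ (since $H$ already has order $pq$ and cannot be enlarged without meeting $R$); then the remaining proper centralizers are exactly the conjugates of $H$, and the element count forces their number to be $r$, giving $|\Cent(G)| = r + 2$. If $H$ is non-abelian, the action $H \to \Aut(C_r)$ factors through $H/H' \cong C_p$; one checks $Z(G) = 1$ and $A \cong C_{qr}$, and using that the Sylow $q$- and Sylow $r$-subgroups of $G$ are unique and both contained in $A$, one shows that every element $x$ outside $A$ has order $p$ with $C_G(x) = \langle x \rangle$; the element count then gives exactly $qr$ such centralizers, so $|\Cent(G)| = qr + 2$. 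I expect the main obstacle to be this final bookkeeping: excluding unexpected maximal abelian subgroups and nailing down their exact count in each subcase, which is precisely where the uniqueness of the Sylow subgroups and the CA-property (distinct proper centralizers meet in $Z(G)$) are combined.
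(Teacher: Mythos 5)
Your argument is correct, but it takes a genuinely different route from the paper's. The paper splits on $\mid Z(G)\mid$ and then on $\mid G'\mid$: for $Z(G)\neq 1$ it quotes \cite{baishya} to get $\mid \Cent(G)\mid=\mid G'\mid+2$ at once, and for $Z(G)=1$ it rules out $\mid G'\mid=q$ by Zmud's lemma on abelian normal subgroups of prime index, recognizes $\mid G'\mid=r$ as the Frobenius case with kernel $G'$ and cyclic complement of order $pq$, and settles $\mid G'\mid=qr$ by citing a counting theorem for groups with an abelian normal subgroup of prime index. You instead organize everything around the normal Sylow $r$-subgroup $R$ and the subgroup $A=C(R)$, split on whether $R\le Z(G)$ and then on the isomorphism type of a Schur--Zassenhaus complement $H$ of order $pq$, and count maximal abelian subgroups directly from the CA-property together with the uniqueness of the Sylow $q$- and $r$-subgroups inside $A$. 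The two case divisions match up ($R\le Z(G)$ gives $q+2$; $H$ cyclic gives $\mid G'\mid=r$ and $r+2$; $H$ non-abelian gives $\mid G'\mid=qr$ and $qr+2$), but your version is self-contained and elementary where the paper outsources the counting, and it has the added merit of saying \emph{which} value occurs in terms of the structure of $G$. Two small points to tidy when writing it up: Lemma \ref{rem144} as stated applies only when $\mid G/Z(G)\mid$ is a product of exactly three primes, so in the subcases where $Z(G)\neq 1$ you should either note that the same argument handles two primes or simply observe that $A$ is abelian because $R\le Z(A)$ and $A/R$ is cyclic; and in the cyclic-$H$ subcase the identification of the remaining centralizers as the conjugates of $H$ should be justified by noting that for $x\notin A$ the abelian group $C(x)$ meets $R$ trivially, hence is a Hall $\lbrace p,q\rbrace$-subgroup containing $Z(G)$, after which your element count (using that distinct proper centralizers meet in $Z(G)$) correctly forces exactly $r$ of them.
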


\begin{proof}
If $\mid Z(G) \mid \neq 1$, then by \cite[Corolary 2.5 and Proposition 2.8]{baishya}, we have $\mid \Cent(G) \mid = \mid G'\mid+2 = q+2$ or $r+2$. On the otherhand if $\mid Z(G) \mid = 1$, then by Remark \ref{rem1}, $C(x)$ is cyclic for any $x \in G \setminus Z(G)$. Moreover, $G$ has a normal subgroup $N$ of order $qr$ and $G'$ is cyclic (since $G$ is metacyclic). Hence $\frac{G}{N}$ is cyclic of order $p$ and so $G' \subseteq N$. Consequently, we have $\mid G' \mid=q, r$ or $qr$. 

Now suppose $\mid G' \mid=q$. Then $N$ is an abelian normal subgroup of $G$ of  index $p$ and so by \cite[Lemma 4 (p. 303)]{zumud}, we have $\mid G \mid =p\mid G'\mid \mid Z(G)\mid$, which is a contradiction.

Next suppose $\mid G' \mid=r$. If $G' \subsetneq C(x)$ for some $x \in G' \setminus \lbrace1\rbrace$, then $C(x)$ is an abelian normal subgroup of $G$ of prime index and hence $\mid Z(G) \mid \neq 1$ by \cite[Lemma 4 (p. 303)]{zumud}, which is a contradiction. On the otherhand, if $C(x)=G'$ for all $ x\in G' \setminus \lbrace1\rbrace$, then by \cite[Proposition 1.2.4]{elizabeth}, we have $G$ is a Frobenius group with Frobenius kernel $G'$ and cyclic Frobenius complement of order $pq$. Consequently $\mid \Cent(G) \mid= r+2$.

Finally, suppose $\mid G' \mid=qr$. Then $G'$ is an abelian normal subgroup of $G$ of prime index and so by \cite[Theorem 2.3]{baishya}, we have $\mid \Cent(G) \mid= qr+2$. 

\end{proof}

As an immediate consequence we have the following corollary:

\begin{cor}\label{lem12}
If $G$ is a finite non-abelian group of order $pqr, p<q<r$  being primes, then $\mid \Cent(G) \mid= \mid G' \mid+2$. 
\end{cor}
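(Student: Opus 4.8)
The plan is to read the conclusion off directly from the case analysis already carried out in the proof of Proposition \ref{lem19}, rather than to argue afresh. The key observation is simply that each of the three admissible values $q+2$, $r+2$, $qr+2$ for $\mid\Cent(G)\mid$ is attained only in a configuration where $\mid G'\mid$ equals $q$, $r$, or $qr$ respectively, so that $\mid\Cent(G)\mid=\mid G'\mid+2$ holds uniformly.

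First I would dispose of the case $\mid Z(G)\mid\neq 1$. Here the proof of Proposition \ref{lem19} already cites \cite[Corollary 2.5 and Proposition 2.8]{baishya} to get $\mid\Cent(G)\mid=\mid G'\mid+2$ outright (with $\mid G'\mid\in\{q,r\}$), so there is nothing further to do. Next I would treat the case $\mid Z(G)\mid=1$. The proof of Proposition \ref{lem19} shows that $G$ is metacyclic with $G'\subseteq N$, where $N\trianglelefteq G$ has order $qr$, whence $\mid G'\mid\in\{q,r,qr\}$. The subcase $\mid G'\mid=q$ is excluded there (it would force $\mid Z(G)\mid\neq 1$ via \cite[Lemma 4 (p. 303)]{zumud}), so it does not arise. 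In the subcase $\mid G'\mid=r$ one obtains $\mid\Cent(G)\mid=r+2=\mid G'\mid+2$, and in the subcase $\mid G'\mid=qr$ one obtains $\mid\Cent(G)\mid=qr+2=\mid G'\mid+2$. Combining the two cases yields the corollary.

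There is essentially no obstacle here beyond the mild bookkeeping point that the value $q+2$ for $\mid\Cent(G)\mid$ is never accompanied by a trivial center; this is precisely the content of the excluded subcase $\mid G'\mid=q$, and it is exactly why no non-abelian group of order $pqr$ can have $\mid\Cent(G)\mid=p+2$. So the ``proof'' is just the remark that in every branch of Proposition \ref{lem19} the quantity $\mid\Cent(G)\mid-2$ agrees with $\mid G'\mid$, and I would present it in that spirit.
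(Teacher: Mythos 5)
Your proposal is correct and matches the paper's intent exactly: the paper states the corollary as ``an immediate consequence'' of Proposition \ref{lem19} with no separate proof, and the justification is precisely your observation that every branch of that proposition's case analysis yields $\mid \Cent(G)\mid = \mid G'\mid + 2$ (the $\mid Z(G)\mid \neq 1$ case giving it directly from the cited results, and the $\mid Z(G)\mid = 1$ case giving $\mid G'\mid = r$ or $qr$ with the matching count).
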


 Recall that a group $G$ is said to be capable if there exists a group $H$ such that $G \cong \frac{H}{Z(H)}$. The following result on capable groups of order $pqr$ where  $p<q<r$ will be used in the next Proposition.

\begin{prop}\label{lem2191}
$G$ is a capable group of order $pqr$ ($p<q<r$ being primes) if and only if $\mid Z(G) \mid =1$.   
\end{prop}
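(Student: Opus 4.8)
The plan is to prove the two implications separately; the reverse one is immediate, while the forward one reduces to a short commutator computation after some order bookkeeping. For the implication ($\Leftarrow$): if $|Z(G)|=1$ then $G\cong G/Z(G)$, so $G$ is the central factor of itself and is therefore capable. Nothing about the order of $G$ is used here.

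For ($\Rightarrow$) I would suppose $G$ is capable, say $G\cong H/Z(H)$, and argue that $Z(G)\neq 1$ is impossible. First dispose of the abelian case: a non-trivial abelian group of order $pqr$ with $p<q<r$ is cyclic, and a non-trivial cyclic group is not a central factor, since $H/Z(H)$ cyclic forces $H$ abelian and hence $H/Z(H)=1$. So $G$ may be assumed non-abelian, whence $G/Z(G)$ is non-cyclic. Since the divisors of $pqr$ equal to $1$ or to a prime give cyclic groups, and $|G/Z(G)|=pqr$ would give $Z(G)=1$, the only possibility is $|G/Z(G)|\in\{pq,pr,qr\}$, so $Z(G)$ has prime order $s\in\{p,q,r\}$, i.e. $Z(G)\cong C_s$. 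Writing $\{u,v\}=\{p,q,r\}\setminus\{s\}$ with $u<v$, the quotient $G/Z(G)$ is then a non-abelian group of order $uv$, hence $G/Z(G)\cong C_v\rtimes_\theta C_u$ with $\theta$ non-trivial, so $(G/Z(G))'=C_v$ and $(G/Z(G))^{\mathrm{ab}}\cong C_u$; in particular $s\neq u$.

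The crux is a standard commutator argument. Let $M$ be the preimage of $Z(G)$ under the projection $H\to H/Z(H)=G$; then $Z(H)\le M\trianglelefteq H$, the quotient $M/Z(H)\cong Z(G)\cong C_s$ is cyclic so $M$ is abelian, and $[H,M]\le Z(H)$ since $M/Z(H)$ is central in $H/Z(H)$. For each $m\in M$ the map $h\mapsto[h,m]$ is then a homomorphism $H\to Z(H)$ (the identity $[h_1h_2,m]=[h_1,m]^{h_2}[h_2,m]$ becomes additive because $[h_1,m]\in Z(H)$), and it is trivial on $Z(H)$ and on $M$ (the latter since $M$ is abelian), so it descends to $\widehat\varphi_m:G/Z(G)\to Z(H)$. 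The assignment $m\mapsto\widehat\varphi_m$ is a homomorphism $M\to\operatorname{Hom}(G/Z(G),Z(H))$ with kernel exactly $Z(H)$, hence it induces an embedding
\[
C_s\;\cong\;Z(G)\;\hookrightarrow\;\operatorname{Hom}\bigl(G/Z(G),Z(H)\bigr)\;=\;\operatorname{Hom}\bigl((G/Z(G))^{\mathrm{ab}},Z(H)\bigr)\;\cong\;\operatorname{Hom}(C_u,Z(H)).
\]
Since $\operatorname{Hom}(C_u,Z(H))$ has exponent dividing $u$ while $C_s$ contains an element of order $s$ and $s\nmid u$ (distinct primes), this is a contradiction, and therefore $Z(G)=1$.

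I expect the only real care to be needed in that last paragraph: checking that $h\mapsto[h,m]$ is genuinely a homomorphism (this rests on $[H,M]\le Z(H)$), that it factors through $G/Z(G)$, and that $m\mapsto\widehat\varphi_m$ has kernel precisely $Z(H)$. Everything else is order arithmetic together with the elementary structure of groups whose order is a product of two distinct primes; no facts about the non-abelian tensor square or the Schur multiplier are required.
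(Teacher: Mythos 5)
Your proof is correct, and it takes a genuinely different route from the paper's. The paper stays inside its centralizer framework: writing $G \cong \frac{H}{Z(H)}$ and supposing $\mid Z(G)\mid =p$ (say), it uses Remark \ref{rem1} ($H$ is a CA-group, so distinct proper centralizers meet exactly in $Z(H)$) to produce two cyclic centralizers of $\frac{H}{Z(H)}$ of orders $pq$ and $pr$, pulls them back to abelian subgroups $C(s)$, $C(t)$ of $H$ of indices $r$ and $q$, and concludes $\mid \frac{H}{Z(H)}\mid = \mid \frac{H}{C(s)\cap C(t)}\mid \leq qr < pqr$, a contradiction (and similarly for the other two primes). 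You instead run the classical commutator-pairing argument: with $M$ the preimage of $Z(G)$ in $H$, the maps $h\mapsto [h,m]$ land in $Z(H)$ and induce an embedding of $Z(G)\cong M/Z(H)$ into $\operatorname{Hom}\bigl((G/Z(G))^{\mathrm{ab}},Z(H)\bigr)$, a group of exponent dividing $u$, while $\mid Z(G)\mid=s$ with $s\neq u$. The steps you flag as delicate all check out: $\varphi_m$ is a homomorphism because $[H,M]\leq Z(H)$; it kills $M$ because $M$ is abelian, which holds here since $M/Z(H)$ is cyclic; the kernel of $m\mapsto\widehat\varphi_m$ is exactly $Z(H)$; and the order bookkeeping correctly reduces to $Z(G)\cong C_s$ with $(G/Z(G))^{\mathrm{ab}}\cong C_u$, $s\neq u$. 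What each approach buys: the paper's version is shorter given that Remark \ref{rem1} is already in place and keeps the paper's toolkit uniform, whereas yours is self-contained, uses no CA-group machinery, and is the standard device of the capability literature --- in effect the general fact that the exponent of $Z_2(H)/Z(H)$ divides the exponent of $(H/Z_2(H))^{\mathrm{ab}}$ --- so it transfers verbatim to other orders (indeed it is close in spirit to the argument needed for Theorem \ref{lem393}).
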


\begin{proof}
Suppose $G$ is a capable group of order $pqr$, $p<q<r$ being primes. Then there exists a group $H$ such that $G \cong \frac{H}{Z(H)}$. Now suppose $\mid Z(\frac{H}{Z(H)})\mid=p$. In view of Remark \ref{rem1}, 
$ \frac{H}{Z(H)}$ has a cyclic centralizer say $C(xZ(H))=\langle sZ(H)\rangle $ of order $pq$ and a cyclic centralizer say $C(yZ(H))=\langle tZ(H)\rangle $ of order $pr$. But then  $\mid \frac{H}{C(s)} \mid=r$ and $\mid \frac{H}{C(t)} \mid=q$ and using Remark \ref{rem1} again, we have $\mid \frac{H}{Z(H)}\mid =\mid \frac{H}{{C(s) \cap C(t)}}\mid \leq qr$, which is a contradiction. Using similar arguments, we can show that  $\mid Z(\frac{H}{Z(H)})\mid \neq q, r$. Hence $\mid Z(G) \mid =1$. Converse is trivial.
\end{proof}

Now we consider the case where $\frac{G}{Z(G)}$ be of order $pqr, p<q<r$ being primes.

\begin{thm}\label{lem191}
If $G$ is a finite group such that $\mid \frac{G}{Z(G)} \mid=pqr$,   $p<q<r$ being primes, then $\mid \Cent(G) \mid= r+2$ or $qr+2$. 
\end{thm}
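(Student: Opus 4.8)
The plan is to transfer the whole problem to the central factor $\bar G:=G/Z(G)$ and then quote Proposition~\ref{lem19}. Since $\bar G$ is itself realized as the central factor of the group $G$, it is a capable group, and $|\bar G|=pqr$; hence Proposition~\ref{lem2191} gives $|Z(\bar G)|=1$. In particular $\bar G$ is non-abelian, $|\bar G/Z(\bar G)|=pqr$, and both $G$ and $\bar G$ are CA-groups by Remark~\ref{rem1}. Moreover, for every $\bar x\in\bar G\setminus\{\bar 1\}$ the centralizer $C_{\bar G}(\bar x)$ is abelian of order a proper divisor of $pqr$, hence cyclic (a finite abelian group of squarefree order is cyclic). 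This last observation is what will make the reduction work.

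Next I would set up a bijection between the proper centralizers of $\bar G$ and those of $G$. Let $\pi\colon G\to\bar G$ be the canonical projection. Given a proper centralizer $\bar C=C_{\bar G}(\bar x)$ of $\bar G$ (so $\bar x\neq\bar 1$ and $\bar C\neq\bar G$), its full preimage $\pi^{-1}(\bar C)$ contains $Z(G)$ and satisfies $\pi^{-1}(\bar C)/Z(G)\cong\bar C$, which is cyclic, so $\pi^{-1}(\bar C)$ is abelian; choosing $x\in\pi^{-1}(\bar C)$ with $\pi(x)=\bar x$ (hence $x\notin Z(G)$) one gets $\pi^{-1}(\bar C)\subseteq C_G(x)$ by abelianness, while $\pi(C_G(x))\subseteq C_{\bar G}(\bar x)=\bar C$ gives $C_G(x)\subseteq\pi^{-1}(\bar C)$; therefore $\pi^{-1}(\bar C)=C_G(x)$ is a proper centralizer of $G$. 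The map $\bar C\mapsto\pi^{-1}(\bar C)$ is injective since $\ker\pi=Z(G)$ is contained in every such preimage. For surjectivity, take a proper centralizer $C_G(y)$ of $G$ with $y\notin Z(G)$; then $\bar C:=C_{\bar G}(\bar y)$ is a proper centralizer of $\bar G$, the preimage $\pi^{-1}(\bar C)$ is abelian and contains $y$ (so $\pi^{-1}(\bar C)\subseteq C_G(y)$), and $\pi(C_G(y))\subseteq\bar C$ (so $C_G(y)\subseteq\pi^{-1}(\bar C)$), whence $C_G(y)=\pi^{-1}(\bar C)$. Adding the pair $Z(G)\leftrightarrow Z(\bar G)$ for the improper centralizers, this yields $|\Cent(G)|=|\Cent(\bar G)|$.

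Finally I would invoke Proposition~\ref{lem19}: $|\Cent(\bar G)|\in\{q+2,\ r+2,\ qr+2\}$, and since $|Z(\bar G)|=1$ the proof of that proposition already narrows this down to $|\Cent(\bar G)|=r+2$ (when $\bar G$ is a Frobenius group with kernel of order $r$ and cyclic complement of order $pq$) or $|\Cent(\bar G)|=qr+2$ (when $\bar G$ has an abelian normal subgroup of order $qr$ and index $p$); the value $q+2$ cannot occur when the centre is trivial, because $|\bar G'|=q$ would make the normal subgroup of $\bar G$ of order $qr$ abelian of prime index, forcing $|\bar G|=p\,|\bar G'|\,|Z(\bar G)|=pq$ by \cite[Lemma~4 (p.~303)]{zumud}, a contradiction. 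Combining with the previous paragraph gives $|\Cent(G)|=r+2$ or $qr+2$.

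The hard part is the centralizer bijection in the second paragraph, and the crucial ingredient that makes it go through is the equality $|Z(\bar G)|=1$ supplied by capability (Proposition~\ref{lem2191}): it forces every proper centralizer of $\bar G$ to have squarefree order and hence be cyclic, so that its full preimage in $G$ is abelian and is therefore an honest centralizer of $G$. Without this cyclicity the preimage of a proper centralizer could fail to be abelian and the correspondence would break down, so I expect the verification that $\bar G$ has trivial centre to be the conceptual pivot of the whole argument.
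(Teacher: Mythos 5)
Your proof is correct, but the key step is routed differently from the paper's. Both arguments open the same way: $\bar G=G/Z(G)$ is capable of order $pqr$, so Proposition~\ref{lem2191} gives $Z(\bar G)=1$. From there the paper redoes the case analysis on $\mid\bar G'\mid\in\{q,r,qr\}$ inside $\bar G$ and, in each surviving case, lifts the centralizer count back to $G$ by citing results tailored to the structure of $\bar G$ (Proposition 3.1 of \cite{amiri2019} for the Frobenius case with kernel of order $r$ and abelian complement preimage, and Proposition 2.9 of \cite{baishya} for $\bar G\cong C_{qr}\rtimes_\theta C_p$). You instead prove once and for all that $\mid\Cent(G)\mid=\mid\Cent(\bar G)\mid$ via the explicit bijection $\bar C\mapsto\pi^{-1}(\bar C)$, whose engine is the observation that every proper centralizer of $\bar G$ is abelian of squarefree order, hence cyclic, so its full preimage is abelian and is therefore itself a centralizer of $G$; this correspondence is verified carefully and is sound. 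Your lifting step is elementary and self-contained where the paper's relies on two outside citations, and it yields Proposition~\ref{lem1266} (the primitive $n$-centralizer property) as an immediate byproduct rather than a separate deduction. One small remark: to exclude the value $q+2$ you re-enter the \emph{proof} of Proposition~\ref{lem19} rather than its statement, since the statement alone allows $q+2$; but the argument you supply for that exclusion (the abelian normal subgroup of order $qr$ together with the Berkovich--Zumud lemma forcing a nontrivial centre) is complete as written, so no gap results.
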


\begin{proof}

By  Proposition  \ref{lem2191}, we have $\mid Z(\frac{G}{Z(G)})\mid =1$. Again, since $\frac{G}{Z(G)}$ is metacyclic therefore $(\frac{G}{Z(G)})'$ is cyclic.   We have $\frac{G}{Z(G)}$ has a normal subgroup $\frac{N}{Z(G)}$ of index $p$ and hence $(\frac{G}{Z(G)})' \subseteq \frac{N}{Z(G)}$. Consequently  $\mid (\frac{G}{Z(G)})' \mid=q, r$  or $qr$.

Now, if $\mid (\frac{G}{Z(G)})' \mid=q$, then $\frac{N}{Z(G)}$ is a cyclic normal subgroup of  $\frac{G}{Z(G)}$ of index $p$. But then by \cite[Lemma 4 (p. 303)]{zumud}, we have  $\mid \frac{G}{Z(G)}\mid \neq 1$, which is a contradiction.

Next suppose $\mid (\frac{G}{Z(G)})' \mid=r$. Now, if $(\frac{G}{Z(G)})' \subsetneq C(xZ(G))$ for some $xZ(G) \in (\frac{G}{Z(G)})' \setminus \lbrace 1 \rbrace$, then by Remark \ref{rem1},  $C(xZ(G))$ is an  abelian normal subgroup of $\frac{G}{Z(G)}$ of prime index. Consequently,  by \cite[Lemma 4 (p. 303)]{zumud} we have  $\mid \frac{G}{Z(G)}\mid \neq 1$, which is a contradiction

On the otherhand, suppose $(\frac{G}{Z(G)})' =C(xZ(G))$ for any $xZ(G) \in (\frac{G}{Z(G)})' \setminus \lbrace 1 \rbrace$. Then by \cite[Proposition 1.2.4]{elizabeth}, $\frac{G}{Z(G)}$ is a Frobenious group with Frobenious kernel $\frac{C(x)}{Z(G)}=C(xZ(G))=(\frac{G}{Z(G)})'$ and cyclic Frobenius complement $\langle yZ(G)\rangle = \frac{C(y)}{Z(G)}$ of order $pq$. Moreover, $C(y)$ is abelian by Remark \ref{rem1}. Therefore by \cite[Proposition 3.1]{amiri2019}, we have $\mid \Cent(G) \mid= r+2$.

Finally, If $\mid (\frac{G}{Z(G)})' \mid=qr$, then $\frac{G}{Z(G)} \cong C_{qr} {\rtimes}_\theta C_p$ and hence by \cite[Proposition 2.9]{baishya}, $\mid \Cent(G) \mid= qr+2$.

\end{proof}

A group $G$ is said to be primitive $n$-centralizer group if $\mid \Cent(G) \mid= \mid \Cent(\frac{G}{Z(G)}) \mid=n$. As an immediate consequence of the above results, we have the following:

\begin{prop}\label{lem1266}
If  $G$ is a finite group such that $\mid \frac{G}{Z(G)} \mid=pqr$,   $p<q<r$ being primes, then $G$ is primitive $n$-centralizer.  
\end{prop}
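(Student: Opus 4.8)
The plan is to read off $|\Cent(G)|$ from Theorem~\ref{lem191}, to compute $|\Cent(\frac{G}{Z(G)})|$ separately from Corollary~\ref{lem12}, and then to verify that the two quantities agree in each case that can actually occur.

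First I would observe that $\bar G := \frac{G}{Z(G)}$ is itself a capable group: taking $H = G$ we have $\bar G \cong \frac{H}{Z(H)}$, so $\bar G$ is the central factor of $G$. Since $|\bar G| = pqr$, Proposition~\ref{lem2191} forces $Z(\bar G) = 1$; in particular $\bar G$ is non-abelian of order $pqr$, and Corollary~\ref{lem12} applies to $\bar G$, giving $|\Cent(\bar G)| = |\bar G'| + 2$.

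Next I would revisit the case analysis in the proof of Theorem~\ref{lem191}. There, using that $\bar G$ is metacyclic with $Z(\bar G) = 1$, one shows $|\bar G'| \in \{r, qr\}$ (the value $q$ being excluded), and moreover: when $|\bar G'| = r$ the group $\bar G$ is Frobenius with kernel $\bar G'$ of order $r$ and cyclic complement of order $pq$, and then $|\Cent(G)| = r + 2$; when $|\bar G'| = qr$ one has $\bar G \cong C_{qr} \rtimes_\theta C_p$, and then $|\Cent(G)| = qr + 2$. Combining this with the previous paragraph, in either case $|\Cent(G)| = |\bar G'| + 2 = |\Cent(\bar G)|$, so $G$ is primitive $n$-centralizer with $n = r+2$ or $n = qr+2$ according to the order of $\bar G'$.

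The argument is essentially bookkeeping layered on top of Theorem~\ref{lem191} and Corollary~\ref{lem12}, so there is no serious obstacle. The one point that deserves care is to pair the value of $|\Cent(G)|$ with the \emph{correct} value of $|\bar G'|$, rather than merely noting that both $|\Cent(G)|$ and $|\Cent(\bar G)|$ happen to lie in $\{r+2, qr+2\}$; but this matching is immediate from the explicit structure of $\bar G$ (Frobenius with kernel of order $r$, respectively $C_{qr}\rtimes_\theta C_p$) recorded in the proof of Theorem~\ref{lem191}, so no new idea is required.
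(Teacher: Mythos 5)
Your proposal is correct and follows essentially the same route as the paper: the paper likewise combines Proposition~\ref{lem19} (equivalently Corollary~\ref{lem12}) with Theorem~\ref{lem191}, using that $Z(G/Z(G))=1$ by capability, to conclude $\mid \Cent(G)\mid = \mid (G/Z(G))'\mid + 2 = \mid \Cent(G/Z(G))\mid$. Your extra care in matching the value of $\mid \Cent(G)\mid$ to the correct value of $\mid (G/Z(G))'\mid$ is a worthwhile explicit check that the paper leaves implicit.
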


\begin{proof}
Combining Proposition \ref{lem19} and Theorem \ref{lem191} and noting that in the present situation  $\mid Z(\frac{G}{Z(G)})\mid =1$, we have $\mid \Cent(G) \mid=\mid (\frac{G}{Z(G)})'\mid+2=\mid \Cent(\frac{G}{Z(G)}) \mid$. 
\end{proof}

Now we consider the groups of order $p^2q$ for distinct primes $p$ and  $q$. Note that in our proofs $n_l$ and $S_l$ denotes the number of $l$-Sylow subgroups and any $l$-Sylow subgroup of a group respectively.

\begin{prop}\label{lem192}
Let $G$ be a non-abelian group and $p<q$ be primes.  
\begin{enumerate}
	\item  If $\mid G \mid =p^2q$, then $\mid \Cent(G) \mid= q+2$ or $G=A_4$.
	\item  If $\mid G \mid =pq^2$, then $\mid \Cent(G) \mid= q+2$ or $q^2+2$.
\end{enumerate}

\end{prop}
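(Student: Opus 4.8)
The plan is to classify non-abelian groups of order $p^2q$ by Sylow analysis and then, for each type, read off $|\Cent(G)|$ from the structure of $G'$ and $Z(G)$, exactly as in Proposition~\ref{lem19}. For part (a), with $|G|=p^2q$ and $p<q$: the number of Sylow $q$-subgroups $n_q$ divides $p^2$ and is $\equiv 1 \pmod q$, so $n_q \in \{1, p, p^2\}$; since $p<q$ we cannot have $n_q=p$, hence $n_q=1$ or $n_q=p^2$, and the latter forces $q \mid p^2-1$, i.e. $q \mid p+1$, which together with $p<q$ gives $q=p+1$, so $p=2$, $q=3$, $|G|=12$. First I would dispose of this exceptional case: the groups of order $12$ with $n_3=4$ are exactly $A_4$, which is the stated exception. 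So assume $n_q=1$, i.e.\ $S_q \trianglelefteq G$. Then $G = S_q \rtimes S_p$ is metacyclic-ish, and since $G$ is non-abelian the action is nontrivial; $G' \subseteq S_q$ forces $|G'|=q$ (as $G'\neq 1$). Now $C_G(S_q) \supseteq S_q$ is normal of index dividing $p^2$; if $C_G(S_q)=G$ then $S_q$ is central and $G/Z(G)$ is a $p$-group of order dividing $p^2$, hence abelian, contradiction. So I must show $|\Cent(G)|=q+2$: the subgroup $S_q$ is an abelian (cyclic) normal subgroup of prime index $p^2$... wait, index $p^2$ is not prime in general, so I would instead invoke that $G$ is a CA-group is \emph{not} available here since $|G/Z(G)|$ need not be a product of three primes --- but it is: $|G/Z(G)|$ divides $p^2q$ and is non-trivial non-prime, and one checks $Z(G)\cap S_q$ has order $1$ (since $S_p$ acts nontrivially and $S_q$ is cyclic of prime order), while $Z(G)\subseteq S_p$; combined with $G$ non-abelian this pins $|Z(G)| \in \{1,p\}$, so $|G/Z(G)| \in \{p^2 q, pq\}$, and in the case $|G/Z(G)|=pq$ Remark~\ref{rem1} applies directly; in the case $|Z(G)|=1$, $|G/Z(G)|=p^2q$ has three prime factors so Remark~\ref{rem1} applies again. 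Either way $G$ is a CA-group, distinct proper centralizers meet in $Z(G)$, and a counting argument (the Sylow $q$-subgroup is its own centralizer or is centralized by a subgroup of order $pq$; the $p^2$ conjugates of $S_p$ or of $C_G(S_q)$ give the remaining centralizers) yields $|\Cent(G)|=q+2$; alternatively cite \cite[Proposition 2.9 / Corollary 2.5 / Proposition 2.8]{baishya} applied to $C_q \rtimes C_{p^2}$ or $C_q \rtimes (C_p\times C_p)$.

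For part (b), with $|G|=pq^2$ and $p<q$: now $n_q \mid p$ and $n_q \equiv 1 \pmod q$, and $p<q$ forces $n_q=1$, so $S_q \trianglelefteq G$ and $S_q$ is an abelian (order $q^2$) normal subgroup of \emph{prime} index $p$. Since $G$ is non-abelian, $Z(G)\subseteq S_q$, and applying \cite[Lemma 4 (p.~303)]{zumud} (as used repeatedly above) gives $|G| = p\,|G'|\,|Z(G)|$, so $|G'|\,|Z(G)| = q^2$, whence $(|G'|,|Z(G)|) \in \{(q,q),(q^2,1)\}$ --- the case $|G'|=1$ being excluded and $|G'|=q^2$ with $|Z(G)|=1$ also possible. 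In the case $|G'|=q^2$, $Z(G)=1$: then $S_q=G'$ is an abelian normal subgroup of prime index, so by \cite[Theorem 2.3]{baishya} we get $|\Cent(G)| = |G'|+2 = q^2+2$. In the case $|G'|=q$, $|Z(G)|=q$: then $|G/Z(G)| = pq$, which has two prime factors, so by Remark~\ref{rem1} $G$ is a CA-group, and since $|Z(G)|\neq 1$ the results \cite[Corollary 2.5 and Proposition 2.8]{baishya} give $|\Cent(G)| = |G'|+2 = q+2$. (One should double-check the sub-case $|Z(G)|=q^2$ is vacuous, which it is, since $Z(G)=G$ would force $G$ abelian; and that $S_q$ cyclic versus $C_q\times C_q$ does not affect the count, only the isomorphism type of $G$.)

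The main obstacle --- really the only delicate point --- is part (a) when $n_q=1$: here $|G|=p^2q$ is \emph{not} a product of three primes unless we first know $Z(G)\neq 1$, so Remark~\ref{rem1} and Lemma~\ref{rem144} do not apply off the shelf. I need to argue directly that $|Z(G)| \in \{1,p\}$ (equivalently rule out $|Z(G)|=p^2$ and $|Z(G)|=pq$, $|Z(G)|=q$, $|Z(G)|=q p^2$), which follows from $Z(G)\subseteq S_p$ together with the non-abelianness, and then reduce to the two cases $|G/Z(G)| \in \{pq, p^2q\}$ where the CA-property is available. The remaining centralizer count is then routine: list the proper centralizers as $C_G(S_q)$ (of order $q|Z(G)|/1$, a single conjugacy-fixed subgroup since $S_q\trianglelefteq G$) and the $|G:C_G(x)|$-many conjugates of a Sylow-$p$-type centralizer, and add $2$ for $G$ itself and $Z(G)$ --- or simply quote the relevant propositions of \cite{baishya} for $C_q\rtimes C_{p^2}$ and $C_q\rtimes(C_p\times C_p)$, both of which give $q+2$. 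The $A_4$ branch is isolated at the very start and needs no centralizer count (indeed $|\Cent(A_4)|=8 \neq q+2=5$, which is precisely why it must be singled out).
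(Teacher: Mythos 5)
Your proposal is correct and follows essentially the same route as the paper: pin $\mid Z(G)\mid$ to $\{1,p\}$ (resp. $\{1,q\}$), dispose of the $\mid\frac{G}{Z(G)}\mid=pq$ cases via \cite[Corollary 2.5]{baishya}, and count Sylow-subgroup centralizers when $Z(G)=1$; the only organizational difference is that the paper treats all three non-abelian groups of order $12$ by direct inspection rather than isolating $A_4$ through $n_q=p^2$, and it asserts $\mid Z(G)\mid\in\{1,q\}$ in part (b) directly rather than deriving it from the Berkovich--Zhmud lemma. Two small slips in your informal count that do not affect the conclusion: in the Frobenius case the complement $S_p$ is self-normalizing, so it has $n_p=q$ (not $p^2$) conjugates, and one adds $1$ only for $G=C(1)$ since $Z(G)$ is not itself an element centralizer here, giving $1+q+1=q+2$.
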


\begin{proof}
a) If $\mid G \mid =12$, then $G\cong A_4$, $T= \langle x, y \mid x^6=1, y^2=x^3, y^{-1}xy=x^{-1} \rangle$ or $D_{12}$.
It is easy to see that $\mid \Cent(A_4) \mid= 6$, $\mid \Cent(T) \mid= 5$ and $\mid \Cent(D_{12}) \mid= 5$.

Next suppose $\mid G \mid >12$. Clearly, we have $\mid Z(G) \mid =1$ or $p$. 

Now, if $\mid Z(G) \mid =p$, then $\mid\frac{G}{Z(G)} \mid=pq$ and hence by \cite[Corolary 2.5]{baishya}, we have $\mid \Cent(G) \mid= q+2$.

On the otherhand if $\mid Z(G) \mid =1$, then  for any Sylow subgroup $S$ of $G$ and any $x (\neq 1) \in S$, we have $C(x)=S$. Hence $\mid \Cent(G) \mid= n_p+n_q+1=q+2$.\\

b) Clearly, we have $\mid Z(G) \mid =1$ or $q$. 

Now, if $\mid Z(G) \mid =q$, then $\mid\frac{G}{Z(G)} \mid=pq$ and hence by \cite[Corolary 2.5]{baishya}, we have $\mid \Cent(G) \mid= q+2$.

On the otherhand if $\mid Z(G) \mid =1$, then  for any Sylow subgroup $S$ of $G$ and any $x (\neq 1) \in S$ we have $C(x)=S$. Hence $\mid \Cent(G) \mid= n_p+n_q+1=q^2+2$.

\end{proof}

The following lemma will be used in proving the next two results.

\begin{lem}\label{lem201}
Let $G$ be a finite CA-group. If $\frac{G}{Z(G)}$ is abelian, then $\frac{G}{Z(G)}$ is elementary abelian.
\end{lem}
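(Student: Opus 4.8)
The plan is to argue by contradiction: suppose $G$ is a finite CA-group with $\frac{G}{Z(G)}$ abelian but not elementary abelian, and derive a contradiction with the CA-property. Since $\frac{G}{Z(G)}$ is abelian and not elementary abelian, some prime $p$ divides $|\frac{G}{Z(G)}|$ in such a way that $\frac{G}{Z(G)}$ has an element $xZ(G)$ of order $p^2$ (or more). First I would pick such an $x$ and consider $C(x)$: since $xZ(G) \notin \{1\}$, $C(x)$ is a proper centralizer, hence abelian by hypothesis, and $Z(G) \subsetneq C(x)$. The key move is to produce a \emph{second} element whose centralizer properly contains $Z(G)$ but is distinct from $C(x)$, so that $C(x) \cap C(y) \supsetneq Z(G)$, contradicting the standard fact (Remark \ref{rem1}) that in a CA-group distinct proper centralizers intersect exactly in $Z(G)$.

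The natural candidate is $y = x^p$. Then $yZ(G) = x^pZ(G)$ has order (at least) $p$ in $\frac{G}{Z(G)}$, so $y \notin Z(G)$ and $C(y)$ is again a proper abelian centralizer containing $Z(G)$. Since $x$ commutes with $x^p$, we have $x \in C(y)$, so $x \in C(x) \cap C(y)$; but $x \notin Z(G)$, hence $C(x) \cap C(y) \supsetneq Z(G)$. By the intersection property of CA-groups this forces $C(x) = C(y)$. Now I would push this further: $C(x) = C(x^p)$ means every element commuting with $x^p$ already commutes with $x$. The idea is to exhibit an element $g$ that commutes with $x^p$ but not with $x$, which would be the desired contradiction. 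To do this, note $x^p \in C(x)$ and $C(x)$ is abelian, so $x^p \in Z(C(x))$; in fact the whole point is that $x^p$ lies deep inside $C(x)$, yet must fail to be central in $G$ (otherwise $xZ(G)$ would have order dividing $p$, contradicting our choice). So there is $g \in G$ with $g \notin C(x^p)$ — wait, that cannot happen if $C(x)=C(x^p)$ unless...

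Let me instead run the contradiction directly through orders. With $C(x) = C(x^p)$ established, observe that $C(x)$ is abelian and contains $x$, hence contains $\langle x, Z(G)\rangle$; moreover any $g \in G \setminus C(x)$ satisfies $g \notin C(x^p)$ as well, i.e.\ $[g, x^p] \neq 1$. On the other hand, I would bring in a generic element: since $\frac{G}{Z(G)}$ is abelian, $G' \subseteq Z(G)$, so for every $g \in G$ the commutator $[g,x]$ lies in $Z(G)$, and then $[g, x^p] = [g,x]^p$ (using $[g,x] \in Z(G)$ so it commutes with everything, giving the clean power identity $[g, x^p] = [g,x]^p$). Hence $[g, x^p] \neq 1$ forces $[g,x]^p \neq 1$, so $[g,x]$ is an element of $Z(G)$ of order divisible by $p^2$ — actually this is fine and not yet a contradiction, so the cleanest finish is: choosing $g \in G \setminus C(x)$, the equality $C(x) = C(x^p)$ combined with $[g,x^p] = [g,x]^p$ shows $[g,x]$ has order $> p$ in the abelian group $Z(G)$, but the same computation with $[g, x^{p^k}] = [g,x]^{p^k}$ and the fact that $xZ(G)$ has finite order $p^m$ shows $[g,x]^{p^m} = [g, x^{p^m}] = [g,1]=1$ is consistent, so I must instead choose $x$ with $xZ(G)$ of order exactly $p^2$ and argue that $C(x^p) = C(x)$ is incompatible with $x^p \notin Z(G)$: pick $g$ with $[g, x^p] \ne 1$ (possible since $x^p \notin Z(G)$); then $g \notin C(x^p) = C(x)$, so $[g,x] \ne 1$, and $[g,x] \in Z(G)$ with $[g,x]^p = [g,x^p] \ne 1$; conversely by symmetry any $h$ with $[h,x] \neq 1$ of $p$-power order in $Z(G)$ can be chosen with $[h,x]$ of order exactly $p$ (replacing $h$ by a suitable power), and such $h$ satisfies $[h, x^p] = [h,x]^p = 1$, so $h \in C(x^p) = C(x)$, i.e.\ $[h,x]=1$ — contradiction.

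\begin{proof}
Suppose, for contradiction, that $\frac{G}{Z(G)}$ is abelian but not elementary abelian. Then there is a prime $p$ and an element $x \in G$ such that $xZ(G)$ has order $p^2$ in $\frac{G}{Z(G)}$; in particular $x \notin Z(G)$ and $x^p \notin Z(G)$. Since $\frac{G}{Z(G)}$ is abelian, $G' \subseteq Z(G)$, so for every $g \in G$ we have $[g,x] \in Z(G)$, and consequently $[g, x^p] = [g, x]^p$ and, for every integer $k$, $[g, x^k] = [g,x]^k$.

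Because $x^p \notin Z(G)$, there exists $g \in G$ with $[g, x^p] \neq 1$, hence $[g,x]^p = [g, x^p] \neq 1$, so $[g,x]$ has order divisible by $p^2$ in $Z(G)$; in particular $[g,x] \neq 1$, so $g \notin C(x)$, while $g \notin C(x^p)$ as well. Now $C(x)$ and $C(x^p)$ are proper centralizers (as $x, x^p \notin Z(G)$), both containing $Z(G)$; since $x$ commutes with $x^p$ we get $x \in C(x) \cap C(x^p)$, and $x \notin Z(G)$, so $C(x) \cap C(x^p) \supsetneq Z(G)$. As $G$ is a CA-group, Remark \ref{rem1} forces $C(x) = C(x^p)$.

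Finally, set $a = [g,x] \in Z(G)$, which has order $p^2 t$ for some $t \geq 1$; put $h = g^{pt}$. Then $[h, x] = [g,x]^{pt} = a^{pt}$ has order $p$, so $h \notin C(x)$, whereas $[h, x^p] = [h,x]^p = a^{p^2 t} = 1$, so $h \in C(x^p) = C(x)$ — a contradiction. Therefore $\frac{G}{Z(G)}$ is elementary abelian.
\end{proof}
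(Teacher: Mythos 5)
Your overall strategy is viable and is genuinely different from the paper's: the paper notes that $\lbrace C(x)/Z(G) : x \in G \setminus Z(G)\rbrace$ is a partition of $G/Z(G)$ and then quotes Zappa's result that an abelian group admitting a nontrivial partition is elementary abelian, whereas you work directly with the commutator map $g \mapsto [g,x]$ (a homomorphism $G \to Z(G)$ with kernel $C(x)$, since $G' \leq Z(G)$) and the CA-intersection property. However, as written your proof has two genuine gaps.

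First, the opening reduction is wrong: for a finite abelian group, ``not elementary abelian'' does \emph{not} imply the existence of an element of order $p^2$. The group $C_p \times C_q$ ($p \neq q$) is abelian, not elementary abelian, and has square-free exponent. So your argument silently omits the case where $G/Z(G)$ has square-free exponent divisible by at least two primes. (This case matters for how the lemma is actually used in Theorem \ref{lem393}, where the point is precisely that a group of order $p^2q$ cannot be elementary abelian because two primes divide its order.) The omitted case can be handled by the same machinery: take $\bar z$ of order $pq$, deduce $C(z)=C(z^p)=C(z^q)$ from the CA-property, and note that the image of $g \mapsto [g,z]$ then has exponent dividing $pq$ but no elements of order $p$ or $q$, hence is trivial, contradicting $z \notin Z(G)$.

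Second, in the case you do treat, the step ``$[g,x]^p \neq 1$, so $[g,x]$ has order divisible by $p^2$'' is a non sequitur: an element whose $p$-th power is nontrivial need not have order divisible by $p$ at all (it could have order coprime to $p$), and your final construction $h=g^{pt}$ collapses without the claim $p^2 \mid o([g,x])$. The claim is true here, but for a reason you never invoke: since $xZ(G)$ has order $p^2$, we have $x^{p^2} \in Z(G)$, hence $[g,x]^{p^2} = [g,x^{p^2}] = 1$; combined with $[g,x]^p \neq 1$ this pins the order down to exactly $p^2$, after which $h=g^p$ gives $[h,x]=[g,x]^p \neq 1$ yet $[h,x^p]=[g,x]^{p^2}=1$, contradicting $C(x)=C(x^p)$. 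With these two repairs your argument becomes a correct, self-contained alternative to the paper's partition-based proof.
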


\begin{proof}
We have $\lbrace \frac{C(x)}{Z(G)} / x \in G \setminus Z(G)\rbrace$ is a partition of $\frac{G}{Z(G)}$ (see \cite[Remark 2.1]{ed09}). Therefore by \cite[ (p. 571)]{zappa}, we have $\frac{G}{Z(G)}$ is elementary abelian.
\end{proof}

Let $p$ and  $q$ be two distinct primes. It may be mentioned here that in \cite[Theorem 1.2]{rashid}, the authors characterised non-abelian capable groups of order $p^2q$ using many technical and sophisticated tools. In the following result, we have characterised any capable group of order $p^2q$. We present a very elementary proof of this result using basic group theory.

\begin{rem}\label{rem151}
Let $p<q$ be primes. By  \cite{holder}, the only non-abelian group of order $p^2q$ such that $S_p=C_p \times C_p$ is $C_p \times (C_q \rtimes C_p)$.  
\end{rem}

\begin{thm}\label{lem393}
Let $G$ be any group and $p<q$ be primes. 
\begin{enumerate}
	\item  Suppose $\mid G \mid = pq^2$. Then $G$ is a capable group if and only if $\mid Z(G) \mid=1$. 
	\item  Suppose $\mid G \mid = p^2q$. Then $G$ is a capable group if and only if $\mid Z(G) \mid=1$ or $G \cong C_p \times (C_q \rtimes C_p)$.
\end{enumerate}

\end{thm}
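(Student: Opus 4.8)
The plan is to prove both directions by reducing ``capable'' to ``admits a non-trivial partition''. The easy direction comes first. If $|Z(G)|=1$ then $G\cong G/Z(G)$, so $G$ is capable; this settles ``$\Leftarrow$'' in part (a) and half of it in (b). For the remaining claim of (b) I would exhibit a cover of $G_0:=C_p\times(C_q\rtimes C_p)$ directly: take a non-abelian group $P$ of order $p^3$ with $|P'|=|Z(P)|=p$ (so $P/P'\cong C_p\times C_p$), let $P$ act on $C_q$ through $P\twoheadrightarrow P/P'\cong C_p\times C_p\longrightarrow\Aut(C_q)$ with image of order $p$ (possible since $p\mid q-1$), and set $H:=C_q\rtimes P$. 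A direct centralizer computation gives $Z(H)=P'\cong C_p$ and $H/Z(H)\cong C_q\rtimes(C_p\times C_p)\cong C_p\times(C_q\rtimes C_p)=G_0$, so $G_0$ is capable.

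For ``$\Rightarrow$'', suppose $G$ is capable, say $G\cong H/Z(H)$. Since $|H/Z(H)|=|G|$ is a product of three primes, Lemma~\ref{rem144} shows $H$ is a CA-group, whence (exactly as in the proof of Lemma~\ref{lem201}, cf.\ \cite{ed09}) the subgroups $C(x)/Z(H)$, $x\in H\setminus Z(H)$, form a non-trivial partition of $H/Z(H)=G$; so $G$ is a partitioned group. If $G$ were abelian it would then be elementary abelian by the result of~\cite{zappa} used in Lemma~\ref{lem201}, which is impossible since $p^2q$ and $pq^2$ are not prime powers; hence $G$ is non-abelian. A routine argument (if $G/Z(G)$ is cyclic then $G$ is abelian, and a central Sylow $q$- resp.\ $p$-subgroup would split off as a direct factor, forcing $G$ abelian) then leaves only $|Z(G)|\in\{1,p\}$ in (b) and $|Z(G)|\in\{1,q\}$ in (a), so it remains to show that $|Z(G)|=p$ forces $G\cong G_0$ and that $|Z(G)|=q$ is impossible.

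Assume $|Z(G)|>1$. In part (b), $|Z(G)|=p$ and $G/Z(G)\cong C_q\rtimes C_p$; by Remark~\ref{rem151} it suffices to rule out a cyclic Sylow $p$-subgroup $\langle g\rangle\cong C_{p^2}$. A short Sylow count (for order $p^2q$ with $p<q$ the Sylow $q$-subgroup is normal unless $p^2\equiv1\pmod q$, the exceptional case $(p,q)=(2,3)$ being checked by hand; see also~\cite{holder}) gives $G=\langle a\rangle\rtimes\langle g\rangle$ with $\langle a\rangle\cong C_q$ normal, and $|Z(G)|=p$ forces the action of $\langle g\rangle$ on $\langle a\rangle$ to have kernel exactly $\langle g^p\rangle$, so $Z(G)=\langle g^p\rangle$ and $g^p a$ has order $pq$. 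But then $g^p\neq1$ lies in $\langle g\rangle\cap\langle g^pa\rangle$ while $\langle g,\,g^pa\rangle=G$, so the part of the partition containing $g^p$ would be all of $G$ --- impossible. Hence the Sylow $p$-subgroup is $C_p\times C_p$ and $G\cong G_0$. In part (a), $|Z(G)|=q$; then the Sylow $q$-subgroup $M$ is normal, abelian, of order $q^2$, and carries an order-$p$ automorphism with fixed subgroup $Z(G)$ of order $q$, which forces $M\cong C_q\times C_q$ (an order-$p$ automorphism of $C_{q^2}$ is fixed-point-free) and $G\cong C_q\times(C_q\rtimes C_p)=\langle z\rangle\times D$ with $Z(G)=\langle z\rangle$ and $D\cong C_q\rtimes C_p$ non-abelian. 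Taking $b_1,b_2$ from two distinct Sylow $p$-subgroups of $D$, each $zb_i$ has order $pq$ with $z\in\langle zb_i\rangle$, so the part containing $z$ contains $\langle z,b_1\rangle$ and $\langle z,b_2\rangle$, hence $\langle z\rangle\langle b_1,b_2\rangle=\langle z\rangle D=G$ --- again impossible. Therefore $|Z(G)|=1$, which completes the proof.

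The main obstacle is precisely the ``$\Rightarrow$'' direction for the two borderline families $C_q\rtimes C_{p^2}$ (order $p^2q$) and $C_q\times(C_q\rtimes C_p)$ (order $pq^2$): being non-abelian with non-trivial centre, their non-capability is not formal and has to be read off some genuine invariant. The device that keeps the argument elementary is the reduction ``$G$ capable $\Rightarrow$ $H$ is a CA-group $\Rightarrow$ $G=H/Z(H)$ is partitioned'', after which the partition obstruction above does all the work; the other point needing care is that the cover $H$ of $C_p\times(C_q\rtimes C_p)$ must be built from a \emph{non-abelian} group of order $p^3$, as otherwise the centre does not come out of order $p$.
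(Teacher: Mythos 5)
Your proof is correct, but the engine of your forward direction is genuinely different from the paper's. The paper also reduces to the CA-property of the cover $H$ (Lemma \ref{rem144}), but it then works directly in $H$: in the borderline case it produces two non-commuting elements $u,v\in H$ whose centralizers have indices $q$ and $p$ (resp.\ $p$ and $p$ in the $pq^2$ case), uses $C(u)\cap C(v)=Z(H)$ to get $\mid H/Z(H)\mid\leq [H:C(u)]\,[H:C(v)]$, and contradicts $\mid H/Z(H)\mid=p^2q$ (resp.\ $pq^2$). You instead push the CA-property down to $G=H/Z(H)$ as a non-trivial partition and then show that the two borderline groups $C_q\rtimes C_{p^2}$ and $C_q\times(C_q\rtimes C_p)$ admit no non-trivial partition, because a non-trivial central element lies in several cyclic subgroups that jointly generate $G$, forcing its partition component to be all of $G$. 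This costs you extra structure theory that the paper avoids (you must first identify $G$ up to isomorphism in each borderline case, including the Sylow count, the observation that an order-$p$ automorphism of $C_{q^2}$ is fixed-point-free, and the Maschke-type splitting $M=Z(G)\oplus W$), but it buys an intrinsic, reusable obstruction: non-capability is read off from the failure of partitionability of $G$ itself, with no reference to the cover $H$ beyond the initial reduction. The paper's index count is shorter and needs nothing about $G$ beyond the isomorphism type of its Sylow $p$-subgroup. Your covering group for $C_p\times(C_q\rtimes C_p)$ is essentially the paper's: the presentation quoted from Western is exactly $C_q\rtimes P$ with $P$ non-abelian of order $p^3$ acting through $P/P'$, i.e.\ your construction written out in generators and relations.
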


\begin{proof}
By Remark \ref{rem1}, $G$ is a CA-group. Therefore in view of Lemma \ref{lem201}, $G$ is non-abelian. Now,\\
a) Suppose $G$ is a capable group of order $pq^2$. Then there exists a group $H$ such that $G \cong \frac{H}{Z(H)}$. By  Remark \ref{rem1}, we have both $G$ and $H$ are CA-groups. Now, suppose $\mid Z(\frac{H}{Z(H)}) \mid =q$.  Note that $\frac{H}{Z(H)}$ has exactly one centralizer of order $q^2$.  Let $C(sZ(H))$ and $C(tZ(H))$ be two distinct centralizers of $\frac{H}{Z(H)}$ of order $pq$. Then $C(sZ(H))=\langle uZ(H) \rangle =\frac{C(u)}{Z(H)}$ and $C(tZ(H))=\langle vZ(G) \rangle =\frac{C(v)}{Z(H)}$. It now follows that $\mid \frac{H}{C(u)}\mid = p= \mid \frac{H}{C(v)}\mid$. 
But then $\mid \frac{H}{Z(H)} \mid = \mid \frac{H}{{C(u) \cap C(v)}} \mid \leq p^2$, which is a contradiction.  Therefore $\mid Z(\frac{H}{Z(H)}) \mid =1$. Converse is trivial.\\

b) Suppose $G$ is a capable group of order $p^2q$. Then there exists a group $H$ such that $G \cong \frac{H}{Z(H)}$. Now, suppose $\mid Z(\frac{H}{Z(H)}) \mid =p$. Let $C(sZ(H))$ and $C(tZ(H))$ be two centralizers of $\frac{H}{Z(H)}$ of order $p^2$ and $pq$ respectively. If $S_p=C_{p^2}$, then $C(sZ(H))=\langle uZ(H) \rangle =\frac{C(u)}{Z(H)}$. Also we have  $C(tZ(H))=\langle vZ(G) \rangle =\frac{C(v)}{Z(H)}$. It now follows that $\mid \frac{H}{C(u)}\mid = q$ and $\mid \frac{H}{C(v)}\mid=p$. 
But then $\mid \frac{H}{Z(H)} \mid = \mid \frac{H}{{C(u) \cap C(v)}} \mid \leq pq$, which is a contradiction. Therefore $S_p = C_p \times C_p$ and hence $G \cong C_p \times (C_q \rtimes C_p)$  by Remark  \ref{rem151}.

Conversely, if $\mid Z(G) \mid=1$, then $G$ is capable.  On the otherhand, from \cite [Section 31]{western},  consider any group $H$ of order $p^3q$ given by $H=\langle a, b, c, d \mid a^p=b^p=c^p=d^q=1, ab=ba, ac=ca, ad=da, bd=db, c^{-1}bc=ab, c^{-1}dc=d^i \rangle$, where $q \equiv 1 \;(mod \;p)$ and $i^p \equiv 1 \;(mod \; q)$. Then $\frac{H}{Z(H)}$ is of order $p^2q$ with $p$-Sylow subgroup $S_p = C_p \times C_p$. Therefore  $ \frac{H}{Z(H)}  \cong C_p \times (C_q \rtimes C_p)$ by Remark  \ref{rem151}.

\end{proof}

Now we compute $\mid Cent(G) \mid$ for groups whose central factor is of order $p^2q$ for any primes $p, q$. Note that the group  $T= \langle x, y \mid x^6=1, y^2=x^3, y^{-1}xy=x^{-1} \rangle$ has non-trivial center and cyclic $2$-Sylow subgroups. Therefore $T$ is not capable by Theorem \ref{lem393}.

\begin{prop}\label{lem193}
Let $G$ be a finite group and $p<q$ be primes.  
\begin{enumerate}
	\item  If $\mid \frac{G}{Z(G)} \mid =12$, then $\mid \Cent(G) \mid= 6$ or $8$. Otherwise,
	\item  If $\mid \frac{G}{Z(G)} \mid =p^2q$, then $\mid \Cent(G) \mid= pq+2$ or $q+2$.
	\item  If $\mid \frac{G}{Z(G)} \mid =pq^2$, then $\mid \Cent(G) \mid= q^2+2$ or $q^2+q+2$.
\end{enumerate}

\end{prop}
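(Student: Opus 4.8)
The plan is to exploit the fact that $K:=\frac{G}{Z(G)}$ is itself capable (witnessed by $G$), together with the centralizer partition of $K$. Since $\mid\frac{G}{Z(G)}\mid$ is $p^{2}q$ or $pq^{2}$, Remark \ref{rem1} gives that $G$ is a CA-group, and since $\mid K\mid$ is not a prime power Lemma \ref{lem201} forces $K$ to be non-abelian; moreover $C(x)\cap C(y)=Z(G)$ for distinct proper centralizers, so $\{\,\frac{C(x)}{Z(G)}:x\in G\setminus Z(G)\,\}$ is a partition of $K$ into abelian subgroups meeting pairwise in the identity, and $\mid\Cent(G)\mid$ equals $1$ plus the number of its blocks. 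Writing $\overline{x}=xZ(G)$, each block satisfies $\langle\overline{x}\rangle\subseteq\frac{C(x)}{Z(G)}\subseteq C_{K}(\overline{x})$, so the whole argument reduces to pinning down the block sizes; being capable, $K$ is restricted by Theorem \ref{lem393}.

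For $\mid K\mid=p^{2}q$ (parts (a), (b)): by Theorem \ref{lem393}(b), $\mid Z(K)\mid=1$ or $K\cong C_{p}\times(C_{q}\rtimes C_{p})$, and when $\mid K\mid=12$ the non-abelian capable groups are exactly $A_{4}$ and $D_{12}\cong C_{2}\times(C_{3}\rtimes C_{2})$ (the dicyclic group of order $12$ is not capable by Theorem \ref{lem393}). If $\mid Z(K)\mid=1$ and $K\ne A_{4}$, Remark \ref{rem151} shows the Sylow $p$-subgroup of $K$ is cyclic of order $p^{2}$ (otherwise $K\cong C_{p}\times(C_{q}\rtimes C_{p})$, which has non-trivial centre); a short computation then shows $C_{K}(\overline{x})$ is the Sylow subgroup of $K$ containing $\overline{x}$ for every $\overline{x}\ne1$, so the blocks are exactly the Sylow subgroups of $K$ and $\mid\Cent(G)\mid=\mid\Cent(K)\mid=q+2$ by Proposition \ref{lem192}(a). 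If $K\cong C_{p}\times(C_{q}\rtimes C_{p})$, write $K=A\times B$ with $\mid A\mid=p$ central and $B=C_{q}\rtimes C_{p}$ non-abelian, and let $S$ be the Sylow $q$-subgroup of $B$; taking $\overline{x}=(a,b)$ with $a\ne1$ and $b$ of order $q$ gives $C_{K}(\overline{x})=A\times S=\langle\overline{x}\rangle$, so $L:=A\times S$ (cyclic of order $pq$) is a block, and it contains every non-trivial element of $Z(K)=A$. Any non-trivial $\overline{x}\in K\setminus L$ is of the form $(a,b)$ with $b$ of order $p$, $C_{K}(\overline{x})$ has order $p^{2}$, and the block of $\overline{x}$ cannot equal $C_{K}(\overline{x})$ (that would make it share a non-trivial element of $A$ with $L$), so it has order $p$; since order-$p$ blocks are disjoint outside the identity, there are $\mid K\setminus L\mid/(p-1)=pq$ of them, whence $\mid\Cent(G)\mid=pq+2$. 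For $\mid K\mid=12$: $A_{4}$ yields $4$ forced blocks of order $3$ (an order-$3$ element generates its centralizer in $A_{4}$) together with either $\{C_{2}\times C_{2}\}$ or the three order-$2$ subgroups of the Sylow $2$-subgroup, so $\mid\Cent(G)\mid\in\{6,8\}$; and $D_{12}$ is covered by the previous case with $(p,q)=(2,3)$, giving $8$. This handles (a) and (b).

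For $\mid K\mid=pq^{2}$ (part (c)): Theorem \ref{lem393}(a) forces $\mid Z(K)\mid=1$, so $K$ is non-abelian of order $pq^{2}$ with trivial centre. As $p<q$, the Sylow $q$-subgroup $Q$ is normal, abelian of order $q^{2}$, and $K=Q\rtimes P$ with $\mid P\mid=p$ acting on $Q$ with no non-trivial fixed point; hence $N_{K}(P)\cap Q=C_{Q}(P)=1$, so $N_{K}(P)=P$ and $K$ has $q^{2}$ Sylow $p$-subgroups, each of order $p$. Every element of $K\setminus Q$ has order $p$ (a non-trivial $q$-power in the cyclic group it generates would lie in $Q$ and centralise it), and its centralizer in $K$ is exactly the order-$p$ subgroup it generates; so the blocks lying outside $Q$ are precisely the $q^{2}$ Sylow $p$-subgroups, while the blocks contained in $Q$ partition the abelian group $Q$ of order $q^{2}$ and hence form either $\{Q\}$ or, when $Q\cong C_{q}\times C_{q}$, the $q+1$ subgroups of order $q$. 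Therefore $\mid\Cent(G)\mid=q^{2}+2$ or $q^{2}+q+2$.

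The hard part will be the case $\mid Z(K)\mid\ne1$ in (b) (equivalently $K\cong C_{p}\times(C_{q}\rtimes C_{p})$, and the $D_{12}$ sub-case of (a)): a priori a block $\frac{C(x)}{Z(G)}$ can be anything between $\langle\overline{x}\rangle$ and $C_{K}(\overline{x})$, so its size must be determined exactly, and the mechanism that does this is that all non-trivial central elements of $K$ are absorbed into the single large block $L$, which then prevents the remaining blocks from exceeding order $p$. Note that here $\mid\Cent(G)\mid=pq+2\ne q+2=\mid\Cent(K)\mid$, so — in contrast with Proposition \ref{lem1266} — $G$ need not be primitive $n$-centralizer. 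A milder version of the same issue is the $\{Q\}$-versus-$(q+1$ copies of $C_{q})$ dichotomy in (c), which is controlled by whether the preimage of $Q$ in $G$ is abelian.
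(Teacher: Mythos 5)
Your proof is correct, and while its first step coincides with the paper's --- both use Theorem \ref{lem393} (together with Remark \ref{rem151}) to pin down the possible isomorphism types of $K=G/Z(G)$ --- the way you extract $\mid\Cent(G)\mid$ is genuinely different. The paper handles the case $Z(K)\neq 1$ (i.e.\ $K\cong C_{pq}\rtimes_\theta C_p$, including $D_{12}$ in part (a)) by citing \cite[Proposition 2.9]{baishya}, and the Frobenius cases $Z(K)=1$ by citing \cite[Proposition 3.1]{amiri2019}; only the $A_4$ subcase is counted by hand there, via an element count equivalent to yours. You instead run one uniform mechanism throughout: the partition of $K$ into the images $C(x)/Z(G)$ of the proper centralizers, with each block squeezed between $\langle \overline{x}\rangle$ and $C_K(\overline{x})$. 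This makes the argument self-contained and shows exactly where the ambiguity in the answer comes from (whether the abelian subgroup $C_2\times C_2$, resp.\ $Q$, is a single block or splits into its order-$p$, resp.\ order-$q$, subgroups), at the cost of case analysis the paper outsources. Two places where you assert rather than argue, both easily repaired: in the $Z(K)=1$, $K\neq A_4$ subcase of (b), the claim that $C_K(\overline{x})$ is the Sylow subgroup containing $\overline{x}$ needs $n_q=1$ (which fails only for $A_4$) and the faithfulness of the $C_{p^2}$-action on $C_q$ (forced by $Z(K)=1$), and one must add that cyclicity of the Sylow subgroups forces each block to fill out the Sylow subgroup containing it. Your closing observation that $\mid\Cent(G)\mid=pq+2\neq q+2=\mid\Cent(K)\mid$ when $K\cong C_p\times(C_q\rtimes C_p)$, so that primitivity fails outside the setting of Proposition \ref{lem1266}, is a correct point the paper does not make.
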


 \begin{proof}
 In view of Remark \ref{rem1} and Lemma \ref{lem201}, $\frac{G}{Z(G)}$ is non-abelian. Now,
   
a) We have $\frac{G}{Z(G)} \cong A_4$ or $D_{12}$. If $\frac{G}{Z(G)} \cong A_4$, then $G$ has exactly $4$ centralizers of index $4$ and these will absorb exactly $\frac{2\mid G \mid}{3}$ non-central elements of $G$, noting that $G$ is a CA-group by  Remark \ref{rem1}. Let $k$ be the number of centralizers produced by the remaining $\frac{\mid G \mid}{3}$ elements of $G$. Now, if $G$ has a centralizer of index $3$, then $k=1$. Suppose $G$ has no centralizer of index $3$. Then $\frac{\mid G \mid}{3}=k(\frac{\mid G \mid}{6}-\frac{\mid G \mid}{12})+ \frac{\mid G \mid}{12}$ and consequently, $k=3$. Hence $\mid \Cent(G) \mid=6$ or $8$. Again, if $\frac{G}{Z(G)} \cong D_{12}$, then by \cite[Proposition 2.9]{baishya}, $\mid \Cent(G) \mid=8$.\\

b) Clearly $\mid Z(\frac{G}{Z(G)}) \mid =1$ or $p$. Now, If $\mid Z(\frac{G}{Z(G)}) \mid =p$, then by Theorem \ref{lem393},  $\frac{G}{Z(G)} \cong C_{pq} {\rtimes}_\theta  C_p$ and so $\mid \Cent(G) \mid= pq+2$ by \cite[Proposition 2.9]{baishya}.

Again if $\mid Z(\frac{G}{Z(G)}) \mid =1$, then we have $C(xZ(G))=S_q$ for any $xZ(G) (\neq 1) \in S_q$. Therefore  by \cite[Proposition 1.2.4]{elizabeth},  $\frac{G}{Z(G)}$ is a Frobenius group with Frobenius kernel $\frac{C(x)}{Z(G)}=C(xZ(G))$ and cyclic  Frobenius complement $\langle yZ(G)\rangle = \frac{C(y)}{Z(G)}$ of order $p^2$. Moreover by Remark \ref{rem1}, $C(y)$ is abelian . Therefore by \cite[Proposition 3.1]{amiri2019}, we have $\mid \Cent(G) \mid= q+2$. \\

c) By Theorem \ref{lem393}, we have $\mid Z(\frac{G}{Z(G)}) \mid =1$. Therefore $\frac{G}{Z(G)}$ is a Frobenius group with Frobenius kernel $\frac{K}{Z(G)}$ and  Frobenius complement $\frac{H}{Z(G)}$ of order $p$. Now, using \cite[Proposition 3.1]{amiri2019}, we have $\mid \Cent(G) \mid= q^2+2$ or $q^2+q+2$ (noting that if $Z(G)=Z(K)$, then $\mid \Cent(K) \mid=q+2$ by \cite[Corollary 2.5]{baishya}).
\end{proof}

Finally, we consider the  groups  of order $p^3$.

\begin{rem}\label{rem2}
Let $G$ be a group of order $p^3$. If $G$ is non-abelian then by \cite[Corollary 8.2]{capable},  $G$ is capable if and only if $G \cong  D_8$ or a $p$ group of exponent $p$.  It may be noted that $\frac{G}{Z(G)} \cong C_2 \times C_2 \times C_2$ for the group $G=Small Group (64, 60)$ in \cite{gap} (it is also pointed in \cite{ed09}). Again, if $p>2$, we have $\frac{G}{Z(G)} \cong C_p \times C_p \times C_p$ for the group $G$=Group $5.3.1$ in \cite[Section 3]{tayler}. Therefore,  if $G$ is abelian then in view of Lemma \ref{lem201}, $G$ is capable if and only if $G \cong C_p \times C_p \times C_p$. 
\end{rem}

It is easy to see that  for any non-abelian group $G$ of order $p^3$, we have $\mid \Cent(G) \mid= p+2$. We now compute  $\mid \Cent(G) \mid$ for any group $G$ (finite or infinite) whose central factor is of order $p^3$. This extends \cite[Theorem 3.3]{azad} and \cite[Proposition 2.10]{amirif4}. It may be mentioned here that  $\mid \Cent(G) \mid=\omega(G)+1$ if and only if $G$ is a CA-group (see arguments of \cite[Lemma 2.6]{ed09}), where $\omega(G)$ is the size of a  maximal set of pairwise non-commuting elements of $G$.

\begin{prop}\label{lem195}
Let $G$ be any group (finite or infinite) and  $p$ be a prime. If $\mid \frac{G}{Z(G)} \mid =p^3$, then $\mid \Cent(G) \mid= \omega(G)+1=p^2+p+2$ or $p^2+2$. 
\end{prop}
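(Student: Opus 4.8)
The plan is to use Lemma~\ref{rem144} (applied with $q=r=p$) to conclude that $G$ is a CA-group, and then to read off $|\Cent(G)|$ from the way the proper centralizers partition $\overline G:=G/Z(G)$, a group of order $p^{3}$. Since $G$ is a CA-group, the observation recorded before the statement gives $|\Cent(G)|=\omega(G)+1$, so it is enough to count the distinct centralizers. Here I would argue that $\Cent(G)=\{G\}\cup\{C(x)\mid x\in G\setminus Z(G)\}$, that each proper centralizer $C(x)$ contains $Z(G)$ and is a proper abelian subgroup, so that $\overline{C(x)}:=C(x)/Z(G)$ is a nontrivial proper subgroup of $\overline G$, necessarily of order $p$ or $p^{2}$; and that by \cite[Remark 2.1]{ed09} (cf.\ Remark~\ref{rem1}) the subgroups $\overline{C(x)}$ form a partition of $\overline G$. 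Writing $k_{i}$ for the number of blocks of order $p^{i}$, this yields $|\Cent(G)|=1+k_{1}+k_{2}$ and, counting nonidentity elements of $\overline G$ through the partition, $k_{1}(p-1)+k_{2}(p^{2}-1)=p^{3}-1$.

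The crucial step, which is what pins the answer down to exactly two values, is that $k_{2}\le 1$: if $M\ne N$ were two blocks of order $p^{2}$, then $|M\cap N|=|M|\,|N|/|MN|\ge p^{4}/p^{3}=p$, so $M$ and $N$ would share a nonidentity element, contradicting the disjointness of the blocks. (Without this, the element count alone would also admit $k_{2}=2$.) Hence either $k_{2}=0$, forcing $k_{1}=p^{2}+p+1$ and $|\Cent(G)|=p^{2}+p+2$, or $k_{2}=1$, forcing $k_{1}=p^{2}$ and $|\Cent(G)|=p^{2}+2$; and $\omega(G)=|\Cent(G)|-1$ throughout, since $G$ is a CA-group. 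I would also note that nothing here uses finiteness of $G$: only the finite quotient $\overline G$ is manipulated, and $\omega(G)$ is finite because $\Cent(G)$ is.

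I expect the one point requiring care to be the partition claim — that, read inside $\overline G$, the proper centralizers tile it by subgroups — but in a CA-group this is routine and essentially already recorded in Remark~\ref{rem1}: if $1\ne\overline g\in\overline{C(x)}$ with a lift $g\notin Z(G)$, then $C(x)$ abelian forces $C(g)=C(x)$, so $\overline g$ uniquely determines its block, and two distinct blocks meet only in the identity. Everything else is elementary counting; in particular the classification of capable groups of order $p^{3}$ (Remark~\ref{rem2}) is not needed for the count, although it identifies $C_{p}\times C_{p}\times C_{p}$, $D_{8}$ and the extraspecial group of exponent $p$ as the only groups that can occur as $\overline G$.
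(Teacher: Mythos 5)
Your proposal is correct and follows essentially the same route as the paper's proof: both reduce to the CA-property of Remark~\ref{rem1}, view the images of the proper centralizers in $G/Z(G)$ as a partition of that order-$p^3$ quotient into subgroups of order $p$ or $p^2$, and split into cases according to whether a block of order $p^2$ occurs. The only (minor) divergence is in ruling out two blocks of order $p^2$: you use $|MN|=|M|\,|N|/|M\cap N|\leq p^3$, while the paper invokes normality of the index-$p$ centralizer together with the Second Isomorphism Theorem — your version is marginally cleaner since it needs no normality.
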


\begin{proof}

By Remark \ref{rem1}, $C(x) \cap C(y) = Z(G)$ for any $x,y \in G \setminus Z(G), xy \neq yx$. Now, suppose $G$ has no centralizer of index $p$. Then each proper centralizer of $G$ will contain exactly $p$ distinct right coset of $Z(G)$. Therefore $\mid \Cent(G) \mid= p^2+p+2$.

Next, suppose $G$ has a centralizer $C(x)$ of index $p$. Then $C(x)$ will contain exactly $p^2$ distinct right cosets of $Z(G)$. Therefore the number of right cosets of $Z(G)$ (other than $Z(G)$) left for the remaining centralizers is $p^3-p^2$. Clearly, in view of Second Isomorphism Theorem, $G$ cannot have another centralizer of index $p$ (noting that in the present scenario $C(x) \lhd G$ ) and consequently, any proper centralizer other than $C(x)$ will contain exactly $p$ distinct right cosets of $Z(G)$. Hence $\mid \Cent(G) \mid= p^2+2$.
\end{proof}

%\section*{Acknowledgment}

%I would like to thank the referee for his/her careful reading and valuable suggestions/comments.

\end{document}